\documentclass[11pt]{amsart}
\usepackage[english]{babel}
\usepackage[a4paper,top=2cm,bottom=2cm,left=3cm,right=3cm,marginparwidth=1.75cm]{geometry}
\usepackage{amsmath}
\usepackage{amssymb}
\usepackage{amsthm}
\usepackage{mathtools}
\usepackage{graphicx}
\usepackage[colorlinks=true, allcolors=blue]{hyperref}
\usepackage{tikz-cd}
\usepackage{enumerate}

\newtheorem{theorem}{Theorem}[section] 
\newtheorem{corollary}[theorem]{Corollary} 
\newtheorem{lemma}[theorem]{Lemma} 

\newtheorem{ques}{Question}[section]

\begin{document}

\title[On $C^k$-functions mapping $\mathbb{Q}$ into itself and Mahler's problem]{On $C^k$-functions mapping $\mathbb{Q}$ into itself and Mahler's problem on Liouville numbers}

\author[J. Lelis]{Jean Lelis}
\address{Departamento de Matemática, Universidade de Brasília, Brasília - DF, Brazil.}
\email{jean.lelis@unb.br}

\author[C. G. Moreira]{Carlos Gustavo Moreira}
\address{Instituto de Matemática Pura e Aplicada IMPA, Rio de Janeiro-RJ, Brasil.}
\email{gugu@impa.br}

\author[E. Silva]{Elaine Silva}
\address{Instituto de Matemática, Universidade Federal de Alagoas, Maceió - AL, Brazil}
\email{elaine.silva@im.ufal.br}

\subjclass[2020]{11B37, 11B39}

\keywords{}
    
\begin{abstract}
Liouville numbers form a classical class of transcendental real numbers characterized by exceptionally strong rational approximations. A theorem of Maillet shows that non-constant rational functions with rational coefficients preserve the Liouville property, motivating a question of Mahler on whether analogous phenomena hold for transcendental functions. In this paper, we address this problem for real functions of finite smoothness. For any $\varepsilon>0$, we construct an uncountable set of $C^k$-functions on $\mathbb{R}$, dense with respect to the topology of uniform convergence on compact sets, mapping $\mathbb{Q}$ into itself and satisfying $\operatorname{den}(f(p/q)) \le q^{2k+\varepsilon}$, and deduce that such functions preserve Liouville numbers. In contrast, we prove a rigidity result about a $C^{2k+1}$-function mapping $\mathbb{Q}$ into itself and satisfying $\operatorname{den}(f(p/q)) \ll q^k$.
\end{abstract}

\maketitle

\section{Introduction}

The arithmetic study of sufficiently smooth functions $f(x)$ mapping algebraic or rational numbers into themselves has been considered in several works. In 1886, Weierstrass gave an example (published in \cite{Wei23}) of a transcendental entire function $f$ that maps the set of algebraic numbers into itself and, moreover, satisfies $f(\mathbb{Q})\subseteq \mathbb{Q}$. Various other constructions followed. For instance, van der Poorten \cite{vdP68} showed that there exist transcendental analytic functions whose values, together with all their derivatives, map every number field into itself. More recently, Marques and Moreira \cite{MM17} gave a positive answer to a question of Mahler \cite[p.~53]{Mahler76} by proving the existence of uncountably many transcendental entire functions with rational coefficients such that both the image and the preimage of algebraic numbers under $f$ consist of algebraic numbers.

In 1991, Pila \cite{Pila91} developed a method to bound the number of rational (and, more generally, algebraic) points on the graph of a smooth function, refining earlier work with Bombieri \cite{bombieri1989number}. His approach is based on the study of how the graph of a function intersects algebraic curves of bounded degree.

In the case where $f$ is transcendental and analytic on a compact interval, one further uses that its graph is not contained in any algebraic curve and intersects each such curve only finitely many times. As a consequence, for every $\varepsilon>0$, there exists $c=c(f,\varepsilon)$ such that the number of rational points on the graph of $f$ of height at most $N$ satisfies
\[
\#\{(x,f(x))\in\mathbb{Q}^2 : H(x),H(f(x))\le N\} \le c N^{\varepsilon},
\]
where $H(p/q)=\max\{|p|,q\}$ for $p/q$ in lowest terms.

In 2016, Lombardo \cite{Lom17} studied analytic functions $f\colon[0,1]\to[0,1]$ inducing bijections $\mathbb{Q}\cap[0,1]\to\mathbb{Q}\cap[0,1]$, showing that such functions satisfy strong arithmetic constraints. For further developments, see \cite{MRS16,LM20}.

Functions mapping rational numbers into themselves exhibit remarkable arithmetic properties. In many cases, such functions preserve important number-theoretic structures, and their behavior is closely related to classical problems in Diophantine approximation and transcendence theory. Understanding these properties provides valuable insights into the interaction between analytic behavior and arithmetic constraints. In 2015, Marques and Moreira \cite{MM2015} related such functions to Mahler’s problem on \textit{Liouville numbers}.

In 1853, Liouville proved a fundamental theorem concerning approximations of algebraic numbers by rational numbers. This theorem enabled him to construct explicitly some transcendental numbers called Liouville numbers. A Liouville number is a real number that can be approximated exceptionally well by rationals. Specifically, $\xi$ is a Liouville number if there exists an infinite sequence of distinct rationals $p_n/q_n$ (with $q_n\to \infty$ as $n\to \infty$) such that 
\[
\left|\xi-\frac{p_n}{q_n}\right|<\frac{1}{q_n^{n}}
\] 
for every positive integer $n$. The set of all Liouville numbers is denoted by $\mathbb{L}$.

By Liouville's theorem, we know that all Liouville numbers are transcendental. In 1955, K. F. Roth \cite{roth55} proved that algebraic numbers cannot be ``well approximated'' by rationals. He proved that, given an irrational algebraic number $\alpha$ and any $\varepsilon>0$, the inequality  
\[
\left|\alpha-\frac{p}{q}\right|<\frac{1}{q^{2+\varepsilon}}
\]
has only finitely many solutions in non-zero $p,q\in\mathbb{Z}$. These results provide a natural way to construct examples of transcendental numbers using rational approximations.

In 1906, E. Maillet \cite{Maillet06} proved an important property of Liouville numbers: if $f(z)$ is any non-constant rational function with rational coefficients, then for every Liouville number $\xi$, the value $f(\xi)$ is itself a Liouville number. In other words, non-constant rational functions preserve the Liouville property. Maillet's idea was to use the fact that rational functions with rational coefficients map $\mathbb{Q}$ into itself in such a way that the \emph{height} of the image is polynomially bounded in terms of the height of the preimage, together with the mean value theorem. Here, the height of a rational number $x=p/q$, written in lowest terms with $p,q\in\mathbb{Z}$ and $q>0$, is defined by $H(x):=\max\{|p|,q\}$.

Motivated by Maillet's result, K. Mahler \cite{Mahler84} asked in 1984 whether {\it transcendental entire functions} could have a similar property. Mahler's question was:
\begin{ques}\label{qm}
Which analytic functions $f(z)$ have the property that if $\xi$ is any Liouville number, then so is $f(\xi)$? In particular, are there entire transcendental functions with this property?
\end{ques}

This became known as Mahler's problem on Liouville numbers. It essentially asks if a transcendental analytic function can carry $\mathbb{L}$ into itself. 

In 2015, Marques and Moreira \cite{MM2015} studied Mahler's question on Liouville numbers. They constructed uncountably many transcendental entire functions $f$ such that $f(\mathbb{Q})\subseteq \mathbb{Q}$ and $\operatorname{den}(f(p/q))<q^{8q^2}$ for every rational number $p/q$ (here, and in what follows, $\operatorname{den}(z)$ denotes the denominator of the rational number $z$ in lowest terms). Using these functions and following the ideas from Maillet's result, they proved the existence of a transcendental entire function $f$ such that $f(\mathbb{L}_{\text{ultra}})\subseteq\mathbb{L}_{\text{ultra}}$, where $\mathbb{L}_{\text{ultra}}\subseteq\mathbb{L}$ is a dense $G_{\delta}$ set (which means that $\mathbb{L}_{\text{ultra}}$ is a large set in a topological sense) formed by numbers with even better rational approximations than Liouville numbers.

In this paper, we establish some results on $C^k$-functions mapping rational numbers into themselves. First, we prove a rigidity result showing that a $C^k$-function mapping rational numbers into rational numbers with polynomially bounded heights and having rational derivatives at the origin up to order $k$ must be locally a rational function. More precisely, we prove the following result.

\begin{theorem}\label{theo1}
Let $\Omega$ be a connected neighborhood of the origin, let $k$ be a positive integer, and let
$f : \Omega \to \mathbb{R}$ be a function of class $C^{2k+1}$ on $\Omega$ such that
$f^{(j)}(0) \in \mathbb{Q}$ for all $0 \le j \le 2k$, 
$f(\Omega \cap \mathbb{Q}) \subseteq \mathbb{Q}$, and
\[
\operatorname{den}\left(f\left(\frac{p}{q}\right)\right)\ll q^k
\]
for all rational numbers $p/q \in \Omega$ with $q$ sufficiently large. Then $f$ is a rational function of rational coefficients with degree at most $k$.
\end{theorem}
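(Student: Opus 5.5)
The plan is to compare $f$ with a Padé-type approximant at the origin, and then use the height bound to show that the error term vanishes. Since $f\in C^{2k+1}$ and $f^{(j)}(0)\in\mathbb{Q}$ for $0\le j\le 2k$, the Taylor polynomial $T(x)=\sum_{j\le 2k} f^{(j)}(0)x^j/j!$ has rational coefficients and $f(x)=T(x)+O(|x|^{2k+1})$ near $0$. Solving the linear system for the $[k/k]$ Padé approximant of $T$ (this has $2k+1$ homogeneous linear conditions in $2k+2$ unknowns, all over $\mathbb{Q}$) gives polynomials $P,Q\in\mathbb{Z}[x]$ of degree at most $k$, with $Q\not\equiv 0$, such that
\[
g(x):=Q(x)f(x)-P(x)=O(|x|^{2k+1})\qquad (x\to 0).
\]
Clearing denominators once and for all, we may assume that $P$ and $Q$ have integer coefficients. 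The goal is to prove $g\equiv 0$ on $\Omega$. This gives $f=P/Q$ away from the zeros of $Q$, and by continuity and connectedness $Q$ has no zero in $\Omega$ (we may first cancel common factors). That is the conclusion of Theorem~\ref{theo1}.

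The arithmetic step is a Liouville-type integrality argument. Take $x=p/q$ in lowest terms with $q$ large, and write $d=\operatorname{den}(f(p/q))\le Cq^k$. Then $q^kP(p/q)$ and $q^kQ(p/q)$ are integers, so
\[
N:=d\,q^k\,g(p/q)\in\mathbb{Z},
\qquad
|N|\le C q^{2k}\cdot C'\Bigl|\frac pq\Bigr|^{2k+1}=C''\frac{|p|^{2k+1}}{q}.
\]
Hence $g(p/q)=0$ whenever $q>C''|p|^{2k+1}$. In particular $g(1/q)=0$ for all large $q$. By Rolle's theorem applied repeatedly, $g^{(j)}(0)=0$ for all $j\le 2k+1$, so $g(x)=o(|x|^{2k+1})$.

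The main obstacle is propagating this vanishing from the origin to all of $\Omega$. The zero set produced above accumulates only at $0$: a point $p/q\approx x\neq 0$ needs $q\gtrsim |x|^{2k+1}q^{2k+1}$, which fails for large $q$. My first attempt is a bootstrap. Once $g$ vanishes to high order at $0$, the estimate for $|N|$ improves to $o(|p|^{2k+1}/q)$. One can then try to combine the values of $f$ at several points $p_i/q$ with a common denominator $q$, for instance via a divided difference or a determinant in the columns $1,x,\dots,x^k,y,xy,\dots,x^{k-1}y$. The aim is an integer quantity which is forced to vanish on a whole interval $[0,\delta]$.

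Once $g\equiv 0$ on $[0,\delta]$, I would run a continuation argument. Let $t^*$ be the supremum of those $t$ with $g\equiv 0$ on $[0,t]$. If $t^*$ is interior to $\Omega$, then $f=P/Q$ to infinite order at $t^*$. Near $t^*$, the same integrality estimate, applied with $x-t^*$ replaced by a rational increment near a rational point close to $t^*$, should then force $g\equiv 0$ slightly beyond $t^*$, a contradiction. The same argument handles negative $t$.

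I expect the delicate point to be exactly this extension past the origin. The denominators of $f$ at different rationals need not share factors, so naive multipoint determinants lose too much. If this cannot be repaired, I would instead look for a rational point $r$ at which the derivatives $f^{(j)}(r)$ are forced to be rational, and restart the Padé argument there.
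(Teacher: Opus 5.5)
There is a genuine gap: the proposal stops exactly where the real work begins. Your integrality step ($g(p/q)=0$ whenever $q>C|p|^{2k+1}$) is correct, and it is also the paper's first step. But you never show that $g$ vanishes on an interval, and the routes you sketch do not get there. The Rolle step only upgrades $O(|x|^{2k+1})$ to $o(|x|^{2k+1})$, because $g$ is merely $C^{2k+1}$. For the same reason there is no ``infinite order'' vanishing at $t^*$: to the right of $t^*$ you control $g$ only to order $2k+1$. A local integrality estimate there ($t^*$ need not even be rational) again gives zeros $p/q$ only when $|p/q-t^*|\ll q^{-2k/(2k+1)}$, so they accumulate only at $t^*$. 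That is the very obstruction you identified at $0$. The common-denominator determinant is left unspecified, and, as you note yourself, mixing denominators of $f$ at different points loses too much.

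The missing idea is to fix the \emph{numerator} instead, allowing it to share factors with $q$, and to use a divided difference in which all but one value vanishes.
\begin{itemize}
\item For fixed $n\ge 1$, your estimate gives $g(n/q)=0$ for $q>Cn^{2k+1}$. So if $g(n/q)\neq 0$ for some $q$, there is a largest such $q=:M$.
\item On the $2k+2$ nodes $n/M,n/(M+1),\dots,n/(M+2k+1)$, only the first value of $g$ is nonzero. Using $\operatorname{den}(g(n/M))\ll M^{2k}$,
\[
|\Delta_g|=|g(n/M)|\prod_{j=1}^{2k+1}\frac{M(M+j)}{nj}\ \ge\ \frac{c}{M^{2k}}\cdot\frac{M^{4k+2}}{(2k+1)!\,n^{2k+1}}=c'\,M\Bigl(\frac{M}{n}\Bigr)^{2k+1}.
\]
On the other hand, $|\Delta_g|\le \sup|g^{(2k+1)}|/(2k+1)!$ by Lemma~\ref{lemmdd}.
\item If $g(a/b)\neq0$ for some rational $a/b\in(0,\varepsilon)$, take $n=am$. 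Then $M\ge bm\to\infty$ while $M/n\ge b/a$, a contradiction.
\item Hence $g$ vanishes at every rational of $(0,\varepsilon)$, since each can be written with an arbitrarily large numerator. By continuity $g\equiv0$ on $[0,\varepsilon]$, and likewise on $[-\varepsilon,0]$.
\end{itemize}

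No continuation argument is then needed. Your normalization $g=Qf-P$ is $C^{2k+1}$ on all of $\Omega$ and satisfies $\operatorname{den}(g(p/q))\ll q^{2k}$. It also satisfies $|g(x)|\le C|x|^{2k+1}$ and $|g^{(2k+1)}|\le C$ on every compact interval containing $0$. So the same argument runs on each such interval and gives $g\equiv 0$ on $\Omega$. Your final step (cancel common factors, then $Q$ has no zero in $\Omega$) finishes the proof. This route even sidesteps the paper's separate treatment of the poles of $P_k/Q_k$.
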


The proof of the above theorem is inspired by the argument in \cite{LelisMarquesMoreiraTrojovsky2024}, where a similar result is established for analytic functions by constructing a rational function that coincides with $f$ on a set accumulating at zero. Since the identity theorem is not available for functions of class $C^{2k+1}$, we instead use estimates on divided differences to show that these functions actually coincide on a dense set. It then follows that they must be identical.

Furthermore, we endow the space $C^k(\mathbb{R})$ with its natural Fréchet topology, namely the topology of uniform convergence on compact sets of all derivatives up to order $k$. A convenient way to describe this topology is through the family of seminorms
\[
p_m(f):=\sum_{j=0}^{k}\sup_{|x|\le m}|f^{(j)}(x)| \qquad (m\in\mathbb{N}),
\]
which induces the metric
\[
d(f,g):=\sum_{m=1}^{\infty}2^{-m}\min\{1,p_m(f-g)\},
\qquad f,g\in C^k(\mathbb{R}).
\]
This metric is compatible with the usual topology of convergence in $C^k$ on compact subsets of $\mathbb{R}$. In other words, a sequence $(f_n)$ converges to $f$ with respect to $d$ if and only if, for every integer $0\le j\le k$ and every compact interval $[-m,m]$, the derivatives $f_n^{(j)}$ converge uniformly to $f^{(j)}$ on $[-m,m]$.

Thus, given $t>2k$, we construct uncountably many functions $f\in C^k(\mathbb{R})$ mapping $\mathbb{Q}$ into itself such that the denominator of $f(p/q)$ is bounded by $q^{t}$ for all rational numbers $p/q$ in lowest terms. In fact, we prove that the set of such functions is dense in $C^k(\mathbb{R})$ with respect to the topology of uniform convergence on compact sets. 

Moreover, we show that these functions can be constructed to coincide on the set
\[
C(2):=\left\{ x\in\mathbb{R} : x=[a_0;a_1,a_2,\ldots]\ \text{with}\ a_n\in\{1,2\}\ \text{for all } n\geq 1 \right\},
\]
that is, the set of real numbers whose continued fraction expansions have partial quotients bounded by $2$. In particular, at most one of these functions can be algebraic. More precisely, we prove the following theorem.

\begin{theorem}\label{theo2}
Let $k\ge 1$ be an integer and let $t>2k$ be a real number. Consider $C^k(\mathbb{R})$ endowed with the metric $d$ introduced above. Then, for every $g\in C^k(\mathbb{R})$ and every $\varepsilon>0$, there are uncountably many functions $f\in C^k(\mathbb{R})$ and a constant $C>0$ such that $d(f,g)<\varepsilon$, $f(\mathbb{Q})\subseteq \mathbb{Q}$, and
\[
\operatorname{den}\left(f\left(\frac{p}{q}\right)\right)\le C q^t
\]
for every rational number $p/q$ written in lowest terms. Moreover, $f(x)=g(x)$ for all $x\in C(2)$.
\end{theorem}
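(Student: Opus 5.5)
The plan is to obtain $f$ as $g$ plus a convergent sum of small bump functions, one bump centred at each rational $p/q$. Each bump corrects the value at its own rational to a rational with controlled denominator. All bumps vanish on $C(2)$ and at every rational already treated.

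\textbf{Construction.} Fix $\phi\in C^\infty(\mathbb{R})$ with support in $[-1,1]$, $\phi(0)=1$ and $0\le\phi\le1$. Put $r_q:=1/(10q^2)$ and $\phi_{p/q}(x):=\phi\bigl((x-p/q)/r_q\bigr)$. Fix a large integer $M$ and set $D_q:=M\lceil q^t\rceil$. Process denominators $q=1,2,3,\dots$ in increasing order. At stage $q$, let $f_{q-1}$ be the function already built, and for every reduced $p/q$ choose
\[
a_{p/q}\in\Bigl(-f_{q-1}(p/q)+\tfrac{1}{D_q}\mathbb{Z}\Bigr),\qquad |a_{p/q}|\le \tfrac{2}{D_q}.
\]
There are always at least two admissible choices. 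Then set $f_q:=f_{q-1}+\sum_{p}a_{p/q}\phi_{p/q}$ and $f:=\lim f_q$.

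\textbf{Support separation.} Three observations make the construction consistent.
\begin{itemize}
\item For a fixed $q$, the supports of the $\phi_{p/q}$ are pairwise disjoint, because the centres are $1/q$ apart.
\item If $P/Q\ne p/q$ with $Q\ne q$, then $|p/q-P/Q|\ge 1/(qQ)>r_{\max(q,Q)}$. Hence a bump of level $Q$ vanishes at every rational of smaller denominator, and a bump of level $q$ vanishes at every rational of larger denominator. So once $f_q(p/q)$ is fixed it never changes, and $f(p/q)=f_q(p/q)\in \frac{1}{D_q}\mathbb{Z}$. This gives $\operatorname{den}(f(p/q))\le D_q\le 2Mq^t$, that is, $C=2M$ (the same bound holds for the translated values at $q=1$).
\item Every $x\in C(2)$ is irrational with partial quotients at most $2$. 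The standard estimate $|x-p/q|\ge 1/((a_{\max}+2)q^2)=1/(4q^2)>r_q$ then shows that every bump vanishes on $C(2)$, so $f=g$ there.
\end{itemize}

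\textbf{Convergence in $C^k$ (the main point).} For $j\le k$ we have $\|a_{p/q}\phi_{p/q}^{(j)}\|_\infty\ll_\phi M^{-1}q^{-t}q^{2j}\le M^{-1}q^{2k-t}$. Summing this crudely over $q$ would need $t>2k+1$. Instead I exploit the geometry at a single point $x$. The bumps that are nonzero at $x$ come from distinct $q$ (at most one per level) and satisfy $|x-p/q|<1/(10q^2)<1/(2q^2)$. By Legendre's theorem each such $p/q$ is a convergent of $x$. Since convergent denominators satisfy $q_{n+2}\ge 2q_n$, these $q$'s grow geometrically. Therefore
\[
\sum_{\text{bumps at }x}|a\,\phi^{(j)}(x)|\ll M^{-1}\sum_n q_n^{2k-t}\ll M^{-1},
\]
uniformly in $x$, using only $t>2k$. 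The same estimate applied to tails $\sum_{q>Q}$ gives a bound $\ll M^{-1}Q^{2k-t}$, uniform in $x$. So $f_q^{(j)}\to f^{(j)}$ uniformly on $\mathbb{R}$ for every $j\le k$. Hence $f\in C^k(\mathbb{R})$ and $\sup_x|f^{(j)}-g^{(j)}|\ll M^{-1}$, which gives $d(f,g)<\varepsilon$ once $M$ is large. I expect this step to be the main obstacle: the hypothesis $t>2k$ (rather than $2k+1$) is usable only because of the lacunarity of convergent denominators.

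\textbf{Uncountability.} At each rational there were at least two admissible values of $a_{p/q}$. Two different choice sequences first differ at some rational, where the final values of $f$ differ, since later bumps vanish there. This yields $2^{\aleph_0}$ distinct functions $f$, all satisfying the required properties with the same constant $C$.
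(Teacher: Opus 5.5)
Your proposal is correct and follows essentially the same route as the paper. Both use bumps of width $\asymp q^{-2}$ and height $\asymp q^{-t}$ at each rational, amplitudes chosen inductively so the value lands in $\frac{1}{D_q}\mathbb{Z}$ with at least two choices, $C^k$-convergence from Legendre's theorem plus the lacunarity of convergent denominators (the paper uses $q_{n_i}\ge F_i$ where you use $q_{n+2}\ge 2q_n$), and the bound $|x-p/q|>1/(4q^2)$ on $C(2)$.

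One harmless slip: a level-$q$ bump need not vanish at rationals of larger denominator (the bump at $0$, of radius $1/10$, need not vanish at $1/20$). You only ever use the correct direction, that later bumps vanish at earlier rationals. Your treatment of all rationals of a given denominator at once also correctly accounts for bumps from neighbouring integer translates, which the paper's per-$m$ induction glosses over.
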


As an application of this result, we derive consequences for $C^k$ functions mapping $\mathbb{L}$ into itself. In particular, we provide an affirmative answer to Mahler’s problem on Liouville numbers within the class $C^k$. More precisely, we obtain the following corollary, which we shall prove in Section \ref{sec4}.

\begin{corollary}\label{corol1}
Let $k\geq1$ be an integer and let $\mathcal{L}_k\subseteq C^k(\mathbb{R})$ be the set 
\[
\mathcal{L}_k :=
\left\{ f\in C^k(\mathbb{R}) : f(\mathbb{L}) \subseteq \mathbb{L},\, f\; \text{transcendental}\right\}.
\]
Then $\mathcal{L}_k$ is uncountable and dense in $C^k(\mathbb{R})$ with respect to the topology of uniform convergence on compact sets.
\end{corollary}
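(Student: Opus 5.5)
We derive the corollary from Theorem \ref{theo2} by a Maillet-type argument. Fix $g\in C^k(\mathbb{R})$ and $\varepsilon>0$. Since transcendental functions with nowhere-vanishing derivative are easy to find near any $g$, we first replace $g$ by a nearby $g_1$ that is transcendental and satisfies $g_1'(x)\neq 0$ for all $x$. One choice is to add $\delta x$ to a smoothing of $g$ and then adjust the result so that $g_1'$ avoids $0$ near each zero of $g'$. Alternatively, to keep things simple, we take $g_1=g+\eta$ with $\eta$ a small transcendental function such as a multiple of $\sin(\lambda x)$ combined with a linear term. Nonvanishing of the derivative is the only property we really need. We then apply Theorem \ref{theo2} with some fixed $t>2k$ and tolerance $\varepsilon/2$ around $g_1$, restricting to the $f$ that are $C^1$-close enough to $g_1$ that $f'\neq0$ everywhere. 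Here we use that $C^k$-closeness with $k\ge1$ controls $f'$ on each compact set. The main obstacle is that the metric $d$ controls only compact sets, so nonvanishing of $f'$ on all of $\mathbb{R}$ requires care. Two fixes are available. First, Liouville preservation is local: it suffices that $f'(\xi)\neq0$ at each $\xi\in\mathbb{L}$. Second, the construction in Theorem \ref{theo2} can presumably be made with perturbations small in $C^1$ on every interval $[-m,m]$ with tolerance shrinking in $m$. We would adopt the second fix by building $g_1$ with $|g_1'|\ge c_m>0$ on $[m,m+1]$ and demanding corresponding smallness.

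\emph{Liouville preservation.} Let $\xi\in\mathbb{L}$ with approximations $p_n/q_n$ satisfying $|\xi-p_n/q_n|<q_n^{-n}$. By Theorem \ref{theo2}, $f(p_n/q_n)=a_n/b_n$ is rational with $b_n\le Cq_n^t$. By the mean value theorem on a small interval around $\xi$ where $|f'|\le M$,
\[
\left|f(\xi)-\frac{a_n}{b_n}\right|\le M q_n^{-n}\le M C^{n/t}\, b_n^{-n/t}.
\]
Since $n/t\to\infty$, this gives arbitrarily good approximation exponents, provided the rationals $a_n/b_n$ are infinitely many distinct ones, equivalently $b_n\to\infty$ or $f(\xi)$ irrational. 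Because $f'(\xi)\ne0$, $f$ is injective near $\xi$, so the $a_n/b_n$ are distinct. Moreover $f(\xi)\neq a_n/b_n$, since $f(\xi)=f(p_n/q_n)$ would contradict injectivity. An infinite sequence of distinct rationals converging to $f(\xi)$ with $|f(\xi)-a_n/b_n|<b_n^{-m}$ for arbitrarily large $m$ forces $f(\xi)$ to be a Liouville number, by the standard equivalent characterization of $\mathbb{L}$. So $f(\mathbb{L})\subseteq\mathbb{L}$.

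\emph{Transcendence and uncountability.} By the last clause of Theorem \ref{theo2}, all the constructed $f$ agree with $g_1$ on $C(2)$, which is uncountable and has accumulation points. If such an $f$ were algebraic, meaning $P(x,f(x))=0$ for a nonzero polynomial $P$, then $P(x,g_1(x))=0$ for $x\in C(2)$. We choose $g_1$ real-analytic and transcendental, so this identity, holding on a set with accumulation points, would extend to all $x$ and make $g_1$ algebraic, a contradiction. Hence every constructed $f$ is transcendental. The uncountably many $f$ provided by Theorem \ref{theo2} all lie in $\mathcal{L}_k$ within distance $\varepsilon$ of $g$, which gives both density and uncountability.
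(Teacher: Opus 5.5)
There is a genuine gap. The property you call ``the only property we really need'' is $g_1'\neq0$ everywhere, or $|g_1'|\ge c_m>0$ on $[m,m+1]$, and it cannot be achieved near a general $g$. Take $g(x)=x^2$. If $d(h,g)<1/2$, then $p_1(h-g)<1$, so $\sup_{[-1,1]}|h'-g'|<1$. This gives $h'(-1)<0<h'(1)$, so $h$ has a critical point in $(-1,1)$. Hence, for every $g$ whose derivative changes sign, every function in a $d$-neighbourhood of $g$ has critical points, and your argument produces nothing there: density in $C^k(\mathbb{R})$ is lost.

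The ``local'' fallback does not rescue it either. Theorem~\ref{theo2} gives no control over where $f'=g_1'+\tilde f'$ vanishes, and $\mathbb{L}$ is dense. At a Liouville critical point $\xi$ your injectivity argument breaks down. Nothing in your proposal then prevents $f(\xi)$ from being rational, e.g.\ equal to $f(p_n/q_n)$ for infinitely many $n$, and in that case the mean value estimate proves nothing.

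The missing idea, which is what the paper uses, is to exclude the rational case arithmetically rather than geometrically. In the construction of Theorem~\ref{theo2}, the admissible values of $f(m+p_n/q_n)$ fill an interval of length $2/(C\lfloor q_n^t\rfloor)$. Use this freedom in choosing the $\lambda_{m,n}$ so that, besides $\operatorname{den}(f(p/q))\le Cq^t$, one also has $\operatorname{den}(f(p/q))\to\infty$ as $q\to\infty$. One way to leave room for this is to run the construction with an exponent $t'\in(2k,t)$. This is a property of the construction, not of the statement of Theorem~\ref{theo2}, so you have to open up the proof to get it.

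With it, your mean value estimate automatically has $b_n\to\infty$. Then $f(\xi)$ cannot equal infinitely many of the $a_n/b_n$, and $f(\xi)\in\mathbb{L}$ with no assumption on $f'(\xi)$; this is Lemma~\ref{lemma:liouville-preservation}(i). You can then drop the derivative condition entirely and keep the rest of your proposal.

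Your transcendence argument is correct. Choose $g_1$ real-analytic and transcendental, e.g.\ a polynomial $C^k$-approximation of $g$ on a large $[-m_0,m_0]$ plus $\delta\sin x$. Then use that $x\mapsto P(x,g_1(x))$ is real-analytic on $\mathbb{R}$ and vanishes on the perfect set $C(2)$. This gives more than the paper's argument, which only shows that at most one member of the family is algebraic: in your version every constructed $f$ is transcendental.
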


\section{On the Rigidity of $C^{k}$ Functions Mapping $\mathbb{Q}$ into Itself}

In this section, we prove Theorem \ref{theo1}, which establishes a rigidity criterion for functions of class $C^{2k+1}$ mapping rational numbers into rational numbers. Before proceeding to the main proof, we provide a brief review of Newton interpolation and divided differences, as these classical tools will play a fundamental role in our arguments.

\subsection{Newton Interpolation and Divided Differences}

In order to prove Theorem~\ref{theo1}, we first review some basic facts about divided differences. Let $(\alpha_i)_{i \geq 0}$ be a sequence of distinct points in $\mathbb{R}$. Define the Newton basis polynomials by
\begin{equation*}
    \omega_{0,\alpha}(x) = 1 \quad \text{and} \quad \omega_{n,\alpha}(x) = \prod_{i=0}^{n-1}(x - \alpha_i)\quad \text{for all } n\geq 1.
\end{equation*}
Let $N_n(x)$ denote the Newton interpolating polynomial of degree at most $n$ associated with a function $f$, interpolating $f$ at the points $\alpha_0, \alpha_1, \ldots, \alpha_n$. This polynomial is given by
\begin{equation*}
    N_n(x) = \sum_{i=0}^n \Delta_f(\alpha_0, \ldots, \alpha_i)\, \omega_{i,\alpha}(x),
\end{equation*}
where $\Delta_f(\alpha_0, \ldots, \alpha_i)$ denotes the $i$th-order divided difference of $f$, defined recursively by
\[
\Delta_f(\alpha_0) = f(\alpha_0),
\]
and
\[
\Delta_f(\alpha_0, \ldots, \alpha_k) = \frac{\Delta_f(\alpha_1, \ldots, \alpha_k) - \Delta_f(\alpha_0, \ldots, \alpha_{k-1})}{\alpha_k - \alpha_0}, \quad \text{for all } k\geq 1.
\]

Divided differences, as the coefficients of the Newton interpolating polynomial, play a central role in numerical analysis, particularly in polynomial interpolation, approximation theory, and spline constructions; see \cite{deBoor05} for a survey. They also arise in various applications in combinatorics \cite{CF11,FL03}, number theory \cite{Xu2012}, and dynamical systems, as in \cite{MS25}.

An explicit expression for the divided difference of order $n$ is given by
\begin{equation}
    \Delta_f(\alpha_0, \ldots, \alpha_n) = \sum_{k=0}^n \frac{f(\alpha_k)}{\displaystyle\prod_{\substack{0 \leq j \leq n \\ j \neq k}} (\alpha_k - \alpha_j)}.
\end{equation}

The following lemma relates the highest-order divided difference to the $k$th derivative of $f$ under suitable regularity assumptions. Since this result is classical in the theory of divided differences, we omit the proof.

\begin{lemma}\label{lemmdd}
Let $f \in C^k([a, b])$, and let $\alpha_0, \alpha_1, \ldots, \alpha_k$ be distinct points in $[a, b]$. Then there exists a point $\xi \in (a, b)$ such that
\[
\Delta_f(\alpha_0, \alpha_1, \ldots, \alpha_k) = \frac{f^{(k)}(\xi)}{k!}.
\]
\end{lemma}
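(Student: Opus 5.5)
The plan is to follow the classical Hermite–Genocchi approach, or more elementarily, an induction on $k$ combined with Rolle's theorem. Set $P = N_k$ to be the Newton interpolating polynomial of degree at most $k$ that agrees with $f$ at $\alpha_0,\dots,\alpha_k$, and put $g = f - P$. Since $P$ has degree at most $k$, we have $P^{(k)} \equiv k!\,\Delta_f(\alpha_0,\ldots,\alpha_k)$, the leading coefficient of the Newton form times $k!$. Also $g\in C^k([a,b])$, because $P$ is a polynomial.

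The function $g$ vanishes at the $k+1$ distinct points $\alpha_0,\dots,\alpha_k$ of $[a,b]$. Order them increasingly and apply Rolle's theorem to $g$ on each of the $k$ consecutive subintervals. This produces $k$ distinct zeros of $g'$, each strictly between consecutive nodes.

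Iterating, we get $k-j$ distinct zeros of $g^{(j)}$ at stage $j$. Each of these lies strictly inside the convex hull of the nodes, and hence in $(a,b)$. At each step we need $g^{(j)}$ to be continuous on the closed subinterval and differentiable on its interior. This holds for $j\le k-1$ because $g\in C^k$, so it suffices that $g^{(k-1)}$ is differentiable.

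After $k$ steps we obtain a point $\xi\in(a,b)$ with $g^{(k)}(\xi)=0$, that is, $f^{(k)}(\xi) = P^{(k)}(\xi) = k!\,\Delta_f(\alpha_0,\dots,\alpha_k)$. This is the claim.

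The only point needing care, and the main obstacle, is identifying the top coefficient of $N_k$ with $\Delta_f(\alpha_0,\dots,\alpha_k)$. In other words, we must check that the recursively defined $N_k$ really interpolates $f$ at all nodes. I would verify this by induction using the standard identity $N_k(x) = N_{k-1}(x) + \Delta_f(\alpha_0,\ldots,\alpha_k)\,\omega_{k,\alpha}(x)$. Alternatively, one can use uniqueness of the interpolant together with the explicit formula for the divided difference, which identifies the coefficient of $x^k$ in the Lagrange form as exactly $\sum_k f(\alpha_k)/\prod_{j\ne k}(\alpha_k-\alpha_j)$.
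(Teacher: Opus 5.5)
The paper omits this proof as classical, and your argument (Rolle's theorem applied $k$ times to $f-P$, using $P^{(k)}\equiv k!\,\Delta_f(\alpha_0,\ldots,\alpha_k)$) is exactly that standard proof; it is correct for $k\ge 1$, and for $k=0$ the lemma's open-interval conclusion fails anyway (take $\alpha_0=a$ and $f$ strictly monotone). Two small fixes: $g^{(j)}$ has at least $k+1-j$ (not $k-j$) distinct zeros, and the identity $N_k=N_{k-1}+\Delta_f(\alpha_0,\ldots,\alpha_k)\,\omega_{k,\alpha}$ by itself only yields interpolation at $\alpha_0,\ldots,\alpha_{k-1}$, so use your second route instead---take $P$ to be the Lagrange interpolant directly, whose $x^k$-coefficient is the paper's explicit formula for $\Delta_f$, and then neither the Newton form nor uniqueness is needed.
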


\subsection{Proof of Theorem \ref{theo1}}

Let $f : \Omega \to \mathbb{R}$ be a function of class $C^{2k+1}$ on $\Omega$ such that $f^{(j)}(0) \in \mathbb{Q}$ for all $0 \le j \le 2k$, $f(\Omega \cap \mathbb{Q}) \subseteq \mathbb{Q}$, and
\[
\operatorname{den}\left(f\left(\frac{p}{q}\right)\right)\ll q^k
\]
for all rational numbers $p/q \in \Omega$ with $q$ sufficiently large. Without loss of generality, we may assume that $f(0)=0$.

By Taylor's theorem applied to $f$ at the origin with the Lagrange form of the remainder, for each $x\in \Omega$, there exists $\xi_x$ between $0$ and $x$ such that 
\begin{equation}\label{fx}
    f(x)=\sum_{j=1}^{2k}\frac{f^{(j)}(0)}{j!}x^j+\frac{f^{(2k+1)}(\xi_x)}{(2k+1)!} x^{2k+1}
    =F_{2k}(x)+x^{2k+1}T(x),
\end{equation}
where $F_{2k}(x)=a_0+a_1x+\cdots+a_{2k}x^{2k}$ is a polynomial with rational coefficients, and $T(x)$ is a continuous function on $\Omega$.

Let us show, as in \cite{LelisMarquesMoreiraTrojovsky2024}, that there exist polynomials $P_k(x)$ and $Q_k(x)$ with integer coefficients and degrees at most $k$ such that
\[
R_{k}(x):=\frac{P_k(x)}{Q_k(x)}=F_{2k}(x)+x^{2k+1}K(x),
\]
where $x^{2k+1}K(x)$ is an analytic function in a neighborhood $\tilde\Omega\subseteq\Omega$ of the origin. 

Indeed, write $Q_k(x)=q_0+q_1x+\cdots+q_kx^k$, and consider the product 
\[
Q_k(x)F_{2k}(x)=b_1x+\cdots+b_{3k}x^{3k}.
\]
We seek integers $q_0,q_1,\ldots,q_k$, not all zero, such that $b_i\in\mathbb{Z}$ for all $1\leq i\leq 3k$, and $b_j=0$ for all $j\in [k+1, 2k]$. In other words, we look for a non-trivial integer solution to the $k\times(k+1)$ homogeneous linear system
\[
    \left\{\begin{array}{ccccccccc}
        a_{2k}q_0 & + & a_{2k-1}q_1 & + & \cdots & + & a_{k}q_k & = & 0 \\
        a_{2k-1}q_0 & + & a_{2k-2}q_1 & + & \cdots & + & a_{k-1}q_k & = & 0 \\
        \vdots & \vdots & \vdots & \vdots & \vdots & \ddots & \vdots & \vdots \\
        a_{k+1}q_0 & + & a_{k}q_1 & + & \cdots & + & a_1q_k & = & 0
    \end{array}\right.
\]
in the variables $q_0,q_1,\ldots,q_k\in\mathbb{Z}$. Since the number of variables exceeds the number of equations, a basic result from linear algebra ensures the existence of a non-trivial solution in $\mathbb{Q}^{k+1}$. Clearing denominators, we may assume that $(q_0,\ldots,q_k)\in \mathbb{Z}^{k+1}\setminus \{(0,\ldots, 0)\}$. 

Therefore,
\[
Q_k(x)F_{2k}(x)=b_1x+b_2x^2+\cdots+b_kx^k+x^{2k+1}S(x),
\]
where $S(x)$ is a polynomial with rational coefficients of degree at most $k-1$. Hence,
\begin{equation}\label{eqRt}
   R_k(x)=\frac{P_k(x)}{Q_k(x)}=F_{2k}(x)-x^{2k+1}\frac{S(x)}{Q_k(x)}
\end{equation}
for all $x\in\Omega\setminus \{x \in \mathbb{R} : Q_k(x) = 0\}$, where $P_k(x)=b_1x+\cdots+b_kx^k$. 
Since $Q_k(x)$ is a non-zero polynomial of degree at most $k$, the multiplicity of $0$ as a root of $Q_k(x)$ is at most $k$. Thus, the highest power of $x$ dividing $Q_k(x)$ is at most $k$, whereas the numerator of the second term contains the factor $x^{2k+1}$. This guarantees that the rational function $x^{2k+1}S(x)/Q_k(x)$ has a removable singularity at the origin (in fact, a zero of multiplicity at least $k+1$). Consequently, $R_k(x)$ can be analytically continued to a neighborhood of the origin.

Now, we consider the function $g:\tilde\Omega\to \mathbb{R}$ defined by
\[
g(x):=f(x)-\tilde R_k(x)=x^{2k+1}(T(x)+K(x)),
\]
where $\tilde{\Omega} = \Omega \setminus \{x \in \mathbb{R} : x \neq 0, \, Q_k(x) = 0\}$, $K(x)=S(x)/Q_k(x)$ and $\tilde R_k(x)$ is the analytic continuation of $R_k(x)$ to a neighborhood of the origin. It follows from the definition of $g$ that it satisfies the following properties:
\begin{enumerate}[(i)]
    \item $g(\mathbb{Q}\cap\tilde\Omega)\subseteq \mathbb{Q}$;
    \item $\operatorname{den} \bigl(g(p/q)\bigr)\ll q^{2k}$;
    \item $g^{(m)}(0)=0$ for all $0\leq m \leq 2k$;
    \item $g\in C^{2k+1}(\tilde\Omega)$;
    \item There exist $\varepsilon>0$ and a constant $C=C(\varepsilon)>0$ such that 
    \[
    |g^{(2k+1)}(x)|\leq C \quad \text{for all } x\in[-\varepsilon, \varepsilon]\subset\tilde\Omega.
    \]
\end{enumerate}

First, we shall prove that, for each positive integer $n$, there exists a positive integer $M_n$ such that
\[
g\left(\frac{n}{q}\right)=0,\quad \text{for all}\quad q>M_n.
\]
    
Indeed, by \eqref{fx} and \eqref{eqRt}, for a fixed positive integer $n$, we have
\[
\left|g\left(\frac{n}{q}\right)\right|
=
\left|\left(\frac{n}{q}\right)^{2k+1}\left(T\left(\frac{n}{q}\right)+K\left(\frac{n}{q}\right)\right)\right|
\]
for all positive integers $q$ such that $n/q\in\tilde\Omega$. Therefore, if $g(n/q)\neq 0$, then
\[
\operatorname{den}\left(g\left(\frac{n}{q}\right)\right)
\left|\left(\frac{n}{q}\right)^{2k+1}\left(T\left(\frac{n}{q}\right)+K\left(\frac{n}{q}\right)\right)\right|
\geq 1.
\]
Since $\operatorname{den}(g(n/q))\ll q^{2k}$ and $(T+K)$ is bounded near the origin, the above inequality cannot hold for all sufficiently large $q$. Hence, there exists a positive integer $M_n$ such that $g(n/q)=0$ for all $q> M_n$.

Now, we want to prove that $g|_{\Omega'}\equiv0$ for some neighborhood $\Omega'\subseteq\Omega$ of the origin. Indeed,  let $\alpha_0,\ldots,\alpha_{2k+1}$ be $2k+1$ distinct points in $[-\varepsilon,\varepsilon]$. Consider the divided difference of order $2k+1$ given by
\begin{equation}\label{Dg1}
    \Delta_g(\alpha_0, \ldots, \alpha_{2k+1}) 
    = \sum_{i=0}^{2k+1} \frac{g(\alpha_i)}{\displaystyle\prod_{\substack{0 \leq j \leq 2k+1 \\ j \neq i}} (\alpha_i - \alpha_j)}.
\end{equation}

Suppose that $g|_{[-\varepsilon,\varepsilon]}\not\equiv0$. Fix a positive integer $n$, and let $M_n$ be the largest integer such that $n/M_n\in(-\varepsilon,\varepsilon)$ and $g(n/M_n)\neq 0$. Taking $\alpha_i=\frac{n}{M_n+i}$ in \eqref{Dg1}, we have $g(\alpha_i)=0$ for all $i>0$, and thus
\begin{equation}\label{Dg3}
    \Delta_g\left(\frac{n}{M_n}, \frac{n}{M_n+1},\ldots, \frac{n}{M_n+2k+1}\right) 
    = \frac{g\left(\frac{n}{M_n}\right)}{\displaystyle\prod_{j=1}^{2k+1} \left(\frac{n}{M_n}-\frac{n}{M_n+j}\right)}.
\end{equation}

A direct computation shows that
\[
\prod_{j=1}^{2k+1} \left(\frac{n}{M_n}-\frac{n}{M_n+j}\right)
=
\frac{n^{2k+1}}{M_n^{2k+1}\binom{M_n+2k+1}{M_n}}.
\]
On the other hand, by Lemma~\ref{lemmdd}, we have
\begin{equation}\label{Dg2}
\Delta_g\left(\frac{n}{M_n}, \frac{n}{M_n+1},\ldots, \frac{n}{M_n+2k+1}\right) 
= \frac{g^{(2k+1)}(\xi_n)}{(2k+1)!},
\end{equation}
for some $\xi_n\in (-\varepsilon,\varepsilon)$. Combining \eqref{Dg3} and \eqref{Dg2}, we obtain
\[
\frac{g^{(2k+1)}(\xi_n)}{(2k+1)!}
=
\frac{g\left(\frac{n}{M_n}\right)\cdot M_n^{2k+1}\cdot \binom{M_n+2k+1}{M_n}}{n^{2k+1}}.
\]

Therefore,
\[
|g^{(2k+1)}(\xi_n)|
=
\frac{\left|g\left(\frac{n}{M_n}\right)\right|\cdot M_n^{2k+1}\cdot \binom{M_n+2k+1}{M_n}\cdot (2k+1)!}{n^{2k+1}}.
\]
Since $g(n/M_n)\neq 0$ and $\operatorname{den}(g(n/M_n))=O(M_n^{2k})$, we obtain
\[
\left|g\left(\frac{n}{M_n}\right)\right|\gg \frac{1}{M_n^{2k}}.
\]
Moreover, noting that $\binom{M_n+2k+1}{M_n} \ge 1$ and $M_n\gg n$, it follows that
\[
|g^{(2k+1)}(\xi_n)|
\gg
\frac{M_n^{2k+1}}{M_n^{2k}}
=
M_n.
\]
In particular, since $M_n \to \infty$ as $n \to \infty$, we have
\[
|g^{(2k+1)}(\xi_n)| \to \infty \quad \text{as } n\to\infty,
\]
which contradicts property (v). Therefore, we conclude that there exists an open interval $\Omega' \subseteq (-\varepsilon, \varepsilon)$ such that $f(x) = R_k(x)$ for all $x \in \Omega'$. 

To complete the proof, we must show that this equality holds on the entire domain $\Omega$. Let $U$ be the connected component of $\Omega \setminus \{x \in \mathbb{R} : x \neq 0, \, Q_k(x) = 0\}$ containing the origin. Since $R_k(x)$ is a rational function with a removable singularity at the origin, it is well-defined and analytic on $U$. 

The previous divided difference argument relied on the bound $\operatorname{den}(g(p/q)) \ll q^{2k}$ and the boundedness of $T(x) + K(x)$. Since these bounds hold uniformly on any compact subset of $U$, we can apply the exact same reasoning to deduce that $f(x) = R_k(x)$ on any compact interval $[a, b] \subset U$ containing the origin. By exhausting the open set $U$ with an increasing sequence of such compact intervals, it follows that $f(x) = R_k(x)$ for all $x \in U$.

Suppose, for the sake of contradiction, that $U \neq \Omega$. Since $\Omega$ is a connected neighborhood (an interval), $U$ must have at least one boundary point $t \in \Omega$. By the definition of $U$, this boundary point $t$ must be a root of $Q_k(x)$, meaning $t$ is a pole of $R_k(x)$. This implies that
\[
\lim_{\substack{s \to t \\ s \in U}} |R_k(s)| = +\infty.
\]

However, since $f$ is of class $C^{2k+1}$ on $\Omega$, it is continuous at $t$. Knowing that $f(x)$ coincides with $R_k(x)$ on $U$, we must have
\[
|f(t)| = \lim_{\substack{s \to t \\ s \in U}} |f(s)| = \lim_{\substack{s \to t \\ s \in U}} |R_k(s)| = +\infty,
\]
which is a contradiction, since $f(t)$ must be a finite real number. 

Therefore, our assumption that $U \neq \Omega$ is false. We conclude that $U = \Omega$, meaning $R_k(x)$ has no poles in $\Omega$, and $f(x) = R_k(x)$ for all $x \in \Omega$. This completes the proof.
\qed

\section{$C^k$-functions mapping $\mathbb{Q}$ into itself}\label{sec4}

In this section, we establish the main constructive arguments of this paper. In Section~\ref{sub3.1}, we prove Theorem \ref{theo2} by explicitly constructing a function $f$ through a properly scaled series of smooth bump functions. Recall that bump functions (smooth real-valued functions with compact support) are classical and ubiquitous tools in real analysis and differential topology (for an explicit construction and fundamental properties on $\mathbb{R}$, we refer the reader to \cite[Chapter 2]{Lee2012}). In Section~\ref{sub3.2}, we prove Lemma \ref{lemma:liouville-preservation}, which provides a general criterion ensuring that $C^1$-functions mapping rational numbers into rational numbers preserve Liouville-type properties under mild growth conditions on the denominators. Finally, in Section~\ref{sub3.3}, we combine these results to prove Corollary \ref{corol1}, demonstrating the density of transcendental functions within our constructed family.

\subsection{Proof of Theorem \ref{theo2}}\label{sub3.1}

Fix a bump function $\psi \in C^{\infty}(\mathbb{R})$ and a constant $M>0$ such that 
\begin{itemize}
    \item[(i)] $\operatorname{supp} \psi = \left[-\tfrac{1}{4}, \tfrac{1}{4}\right]$;
    \item[(ii)] $\psi(x) = 1$ for all $x \in \left(-\tfrac{1}{5}, \tfrac{1}{5}\right)$;
    \item[(iii)] $|\psi^{(j)}(x)|\leq M$ for all $x\in\mathbb{R}$ and $0\leq j\leq k$.
\end{itemize}

We consider the enumeration of the rational numbers in $[0,1)$ given by
\[
\mathbb{Q} \cap [0,1) = \left\{\tfrac{p_n}{q_n}\right\}_{n \geq 0}
= \left\{0, \tfrac12, \tfrac13, \tfrac23, \tfrac14, \tfrac34, \tfrac15, \dots\right\}.
\]

Let $\varepsilon>0$ be given and define
\[
C:=\left\lceil \frac{(k+1)MA_t}{\varepsilon}\right\rceil,
\quad\text{and}\quad
A_t:=\sum_{n\geq1}\frac{1}{F_n^{t-2k}},
\]
where $F_n$ denotes the $n$th Fibonacci number.

For each $m\in\mathbb{Z}$ and $n\geq 0$, define
\[
\psi_{m,n}(x) := \frac{\lambda_{m,n}}{C\lfloor q_n^{t}\rfloor} 
\, \psi\!\left(q_n^2\left(x-m - \frac{p_n}{q_n}\right)\right),
\]
where $\lambda_{m,n} \in [-1,1]$ will be chosen later.

Since $\operatorname{supp} \psi = \left[-\tfrac{1}{4}, \tfrac{1}{4}\right]$, we have $\psi_{m,n}(x)=0$ whenever
\[
\left|x-m-\frac{p_n}{q_n}\right|>\frac{1}{4q_n^2}.
\]
Hence, if $\psi_{m,n}(x)\neq 0$, then $p_n/q_n$ is a convergent of the continued fraction expansion of $x-m$.

We now define
\[
f(x)=g(x)+\sum_{m\in\mathbb{Z}}\sum_{n\ge 0}\psi_{m,n}(x),
\]
and we will choose $\lambda_{m,n}\in[-1,1]$ so that $f$ satisfies the desired properties.

Fix $m\in\mathbb{Z}$. Note that $\psi_{\ell,n}(m)=0$ for all $\ell\neq m$  and  all $n> 0$, while $\psi_{m,0}(m)=\lambda_{m,0}/C$. Thus, since $C\in \mathbb{N}$,
\[
f(m)=g(m)+\frac{\lambda_{m,0}}{C}.
\]
Since $g(m)\in\mathbb{R}$, there exist at least two choices of $\lambda_{m,0}\in[-1,1]$ such that $f(m)\in\mathbb{Q}$ and
\[
\operatorname{den}(f(m))\le C.
\]

Suppose now that $\lambda_{m,0},\ldots,\lambda_{m,n-1}$ have been chosen so that
\[
f\!\left(m+\frac{p_j}{q_j}\right)\in\mathbb{Q}
\quad \text{and} \quad
\operatorname{den}\!\left(f\!\left(m+\frac{p_j}{q_j}\right)\right)\le C q_j^t
\]
for all $0\le j\le n-1$.

Evaluating at $m+\frac{p_n}{q_n}$, we obtain
\[
f\!\left(m+\frac{p_n}{q_n}\right)
=
g\!\left(m+\frac{p_n}{q_n}\right)
+
\sum_{j=0}^{n-1}\psi_{m,j}\!\left(m+\frac{p_n}{q_n}\right)
+
\frac{\lambda_{m,n}}{C\lfloor q_n^{t}\rfloor}.
\]
The first two terms are already fixed real numbers. By varying $\lambda_{m,n}\in[-1,1]$, we can ensure that
\[
f\!\left(m+\frac{p_n}{q_n}\right)\in\mathbb{Q}
\quad \text{and} \quad
\operatorname{den}\!\left(f\!\left(m+\frac{p_n}{q_n}\right)\right)\le C q_n^t.
\]
Moreover, there are at least two such choices.

Next, we prove that $f\in C^{k}(\mathbb{R})$. By the chain rule, for all $m\in\mathbb{Z}$, $n\ge 0$, and $j\ge 0$, we have
\begin{equation}\label{psij}
    \psi_{m,n}^{(j)}(x) = 
\frac{\lambda_{m,n} q_n^{2j}}{C\lfloor q_n^{t}\rfloor}
\,
\psi^{(j)}\!\left(q_n^2\left(x-m - \frac{p_n}{q_n}\right)\right).
\end{equation}

Let
\[
\tilde f(x) := \sum_{m \in \mathbb{Z}} \sum_{n \geq 0} \psi_{m,n}(x).
\]
Then, by \eqref{psij},
\begin{equation}\label{series}
    \tilde f^{(j)}(x)=\sum_{m \in \mathbb{Z}}\sum_{n \geq 0} \psi^{(j)}_{m,n}(x).
\end{equation}

If 
\[
\psi^{(j)}\left(q_n^2\left(x-m - \frac{p_n}{q_n}\right)\right)\neq 0,
\]
then $p_n/q_n$ is a convergent of $x-m$. Therefore,
\begin{equation}\label{inequality}
     |\tilde f^{(j)}(x)|\leq \sum_{i \geq 0} \frac{ Mq_{n_i}^{2j}}{C\lfloor q_{n_i}^{t}\rfloor},
\end{equation}
where $\{p_{n_i}/q_{n_i}\}$ are the convergents of $x$.

By continued fraction theory, $q_{n_i}\geq F_i\geq \varphi^{i-2}$, where $\varphi=(1+\sqrt{5})/2$. Hence, the series in \eqref{inequality} converges for all $0\leq j < t/2$. Since $t>2k$, it follows that $\tilde f\in C^k(\mathbb{R})$, and therefore $f\in C^k(\mathbb{R})$.

Now, we estimate the distance
\[
|f^{(j)}(x)-g^{(j)}(x)| = |\tilde f^{(j)}(x)|,
\]
and thus
\[
\sum_{j=0}^{k}\sup_{|x|\le m}|f^{(j)}(x)-g^{(j)}(x)|
\leq \sum_{j=0}^{k}\sum_{i \geq 0} \frac{ Mq_{n_i}^{2j}}{C\lfloor q_{n_i}^{t}\rfloor}.
\]
Using $q_{n_i}\geq F_i$, we obtain
\[
\sum_{j=0}^{k}\sum_{i \geq 0} \frac{ Mq_{n_i}^{2j}}{C\lfloor q_{n_i}^{t}\rfloor}
\le \frac{M(k+1)}{C}\sum_{n\geq1}\frac{1}{F_n^{t-2k}}
<\varepsilon.
\]
Since at each step of the construction there are at least two choices for $\lambda_{m,n}$, there are uncountably many such functions. 

Finally, we remark that for all $\alpha \in C(2)$ and for all rational $p/q\in \mathbb{Q}$ we have that
\[
\left|\alpha-\frac{p}{q}\right|>\frac{1}{4q^2},
\]
by the theory of continued fractions. Therefore, $\psi_{m,n}(\alpha)=0$ for all $m,n\in\mathbb{Z}$, with $n\geq 0$. Hence $\tilde f(\alpha)=0$, and $f(\alpha)=g(\alpha)$. This completes the proof.
\qed

\subsection{$C^k$-functions mapping Liouville numbers into themselves}\label{sub3.2}

The next result provides a general criterion ensuring that $C^1$-functions mapping rational numbers into rational numbers preserve Liouville-type properties under mild growth conditions on the denominators.

\begin{lemma}\label{lemma:liouville-preservation}
Let $\Psi:\mathbb{N}\to[0,+\infty)$ be a function such that $\Psi(q)\gg q$ for all sufficiently large $q$, and define
\[
\mathbb{L}_{\Psi}=\left\{\xi\in\mathbb{R}:\; \left|\xi-\frac{p}{q}\right|<\frac{1}{(\Psi(q))^n}
\ \text{for infinitely many } \frac{p}{q}\in\mathbb{Q}\right\}.
\]
Let $\Omega\subseteq\mathbb{R}$ be an open set, and let $f\in C^1(\Omega)$ satisfy
\[
f(\mathbb{Q}\cap\Omega)\subseteq\mathbb{Q}
\quad\text{and}\quad
\operatorname{den}(f(p/q))\le \Psi(q)
\]
for every rational number $p/q\in\mathbb{Q}\cap\Omega$ with $q$ sufficiently large. Then
\[
f(\mathbb{L}_{\Psi}\cap\Omega)\subseteq \mathbb{L}\cup\mathbb{Q}.
\]
Moreover:
\begin{enumerate}[(i)]
    \item If $\operatorname{den}(f(p/q))\to\infty$ as $q\to\infty$, then
    \[
    f(\mathbb{L}_{\Psi}\cap\Omega)\subseteq \mathbb{L}.
    \]
    
    \item If $\Psi(q)\ll q^t$ for some $t>0$, then $\mathbb{L}_{\Psi}=\mathbb{L}$ and hence
    \[
    f(\mathbb{L}\cap\Omega)\subseteq \mathbb{L}.
    \]
\end{enumerate}
\end{lemma}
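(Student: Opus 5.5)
The plan is Maillet's argument: combine the mean value theorem with the denominator bound. The definition of $\mathbb{L}_\Psi$ should be read as ``for every $n$ there is $p/q$ with $|\xi-p/q|<\Psi(q)^{-n}$, with $q$ unbounded'', as in the definition of $\mathbb{L}$.

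First, fix $\xi\in\mathbb{L}_\Psi\cap\Omega$ and a compact interval $I=[\xi-\delta,\xi+\delta]\subseteq\Omega$. Put $L:=1+\max_I|f'|$, which is finite since $f\in C^1(\Omega)$. For each $n$ pick $p_n/q_n$ in lowest terms with
\[
|\xi-p_n/q_n|<\Psi(q_n)^{-n},
\]
where $q_n\to\infty$. Since $\Psi(q)\gg q$, we have $\Psi(q_n)\to\infty$, so $p_n/q_n\in I$ for large $n$.

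By the mean value theorem,
\[
\left|f(\xi)-f(p_n/q_n)\right|\le L\,\Psi(q_n)^{-n}.
\]
Write $f(p_n/q_n)=r_n/s_n$ in lowest terms, with $s_n\le\Psi(q_n)$ for large $n$. If $s_n\ge 2$, then $s_n^{\,n-1}\le \Psi(q_n)^{n-1}$. Since $\Psi(q_n)\to\infty$, we get $L\,\Psi(q_n)^{-n}\le \Psi(q_n)^{-(n-1)}$ for large $n$. Hence
\[
|f(\xi)-r_n/s_n|<s_n^{-(n-1)}.
\]
Here the exponent $n-1$ tends to infinity.

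Now split into cases. Suppose the rationals $r_n/s_n$ take infinitely many distinct values, or more generally $f(\xi)$ is irrational. Then $f(\xi)\ne r_n/s_n$ for every $n$, and the inequality forces $s_n\to\infty$ along a subsequence: bounded denominators would give only finitely many candidates at distance tending to $0$ from an irrational number. This shows $f(\xi)\in\mathbb{L}$. Otherwise $f(\xi)$ is rational. This gives $f(\mathbb{L}_\Psi\cap\Omega)\subseteq\mathbb{L}\cup\mathbb{Q}$.

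The main technical point is the case where $s_n$ stays bounded. Then $|f(\xi)-r_n/s_n|\to0$ with only finitely many candidate values, so $f(\xi)=r_n/s_n$ eventually and $f(\xi)\in\mathbb{Q}$. This case is excluded exactly by hypothesis (i).

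For (i), suppose $f(\xi)=a/b\in\mathbb{Q}$. Since $\operatorname{den}(f(p_n/q_n))=s_n\to\infty$, we have $r_n/s_n\neq a/b$ for large $n$. Then
\[
|a/b-r_n/s_n|\ge \frac{1}{b s_n},
\]
which contradicts $|f(\xi)-r_n/s_n|<s_n^{-(n-1)}$ once $n\ge3$ and $s_n>b$. Hence $f(\xi)$ is irrational, and the previous step gives $f(\xi)\in\mathbb{L}$.

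For (ii), compare the two sets directly.
\begin{enumerate}[(a)]
\item If $\Psi(q)\le c\,q^t$, then $\Psi(q)^n\le c^n q^{tn}$. Hence $\mathbb{L}\subseteq\mathbb{L}_\Psi$: given $n$, use a Liouville approximation with exponent $N=2tn$, so that $|\xi-p/q|<q^{-N}\le \Psi(q)^{-n}$ for large $q$.
\item If $\Psi(q)\ge c'q$, then $\Psi(q)^{-n}\le c'^{-n}q^{-n}$. For approximations with $q$ large this gives $\mathbb{L}_\Psi\subseteq\mathbb{L}$ after a shift of exponent, since $c'^{-n}q^{-n}<q^{-(n-1)}$ for $q>c'^{-n}$. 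One should also note that points of $\mathbb{L}_\Psi$ are irrational, because a rational $\xi$ cannot satisfy $|\xi-p/q|<q^{-n}$ for distinct $p/q$ with arbitrarily large $n$ and $q$.
\end{enumerate}

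Finally, with $\mathbb{L}_\Psi=\mathbb{L}$ in hand, the conclusion of (ii) follows from the main statement. Statement (ii) as written presumably relies on the denominators growing, as in (i), or else it yields only $\mathbb{L}\cup\mathbb{Q}$; I would flag and combine these as needed.
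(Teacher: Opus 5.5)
Your proposal is correct and follows the paper's proof: the mean value theorem on a compact interval around $\xi$, the bound $\operatorname{den}(f(p_n/q_n))\le\Psi(q_n)$, the rational-or-Liouville dichotomy, and $\mathbb{L}_\Psi=\mathbb{L}$ for (ii), with your shift $n\mapsto n-1$ and your treatment of bounded denominators supplying details the paper leaves implicit. Your flag on (ii) is also justified: a constant rational function shows that (ii) only gives $\mathbb{L}\cup\mathbb{Q}$ without the growth hypothesis of (i), and the paper indeed combines the two when proving Corollary~\ref{corol1}.
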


\begin{proof}
Let $\xi\in \mathbb{L}_{\Psi}\cap\Omega$. Since $\Omega$ is open, there exists a compact interval $I\subseteq \Omega$ containing $\xi$ in its interior. For all rational numbers $p/q$ sufficiently close to $\xi$, we have $p/q\in I$.

By definition of $\mathbb{L}_{\Psi}$, there exists a sequence $(p_n/q_n)_{n\geq1}$ such that
\[
0<\left|\xi-\frac{p_n}{q_n}\right|<\frac{1}{(\Psi(q_n))^n}.
\]

By the Mean Value Theorem, for each $n$ there exists $\theta_n$ between $\xi$ and $p_n/q_n$ such that
\[
\left|f(\xi)-f\!\left(\frac{p_n}{q_n}\right)\right|
= |f'(\theta_n)|\left|\xi-\frac{p_n}{q_n}\right|.
\]
Since $f'\in C^0(\Omega)$, it is bounded on $I$, and therefore there exists $M>0$ such that
\[
\left|f(\xi)-f\left(\frac{p_n}{q_n}\right)\right|
\le M \left|\xi-\frac{p_n}{q_n}\right|
< \frac{M}{(\Psi(q_n))^{n}}.
\]

Write $f(p_n/q_n)=a_n/b_n$ in lowest terms. Then $b_n\le \Psi(q_n)$, and hence
\[
\left|f(\xi)-\frac{a_n}{b_n}\right|
\ll \frac{1}{b_n^{\,n}}.
\]

Therefore, either $f(\xi)=a_n/b_n$ for infinitely many $n$, in which case $f(\xi)\in\mathbb{Q}$, or $f(\xi)$ admits infinitely many rational approximations of Liouville type, which implies that $f(\xi)\in\mathbb{L}$. This proves the first assertion.

If $\operatorname{den}(f(p/q))\to\infty$ as $q\to\infty$, the rational case cannot occur, and hence $f(\xi)\in\mathbb{L}$, proving (i). Finally, if $\Psi(q)\ll q^t$, then $\mathbb{L}_{\Psi}=\mathbb{L}$, and (ii) follows.
\end{proof}

Note that if $\alpha\in\Omega$ and $\varepsilon>0$ are such that
\[
0<\left|\alpha-\frac{p}{q}\right|<\frac{1}{(\Psi(q))^{2+\varepsilon}}
\]
for infinitely many rational numbers $p/q\in\mathbb{Q}\cap\Omega$, arguing as in the proof, one obtains
\[
\left|f(\alpha)-\frac{a}{b}\right|
\ll \frac{1}{b^{\,2+\varepsilon}}.
\]
Hence, by Roth's theorem, either $f(\alpha)\in\mathbb{Q}$ or $f(\alpha)$ is transcendental. In particular, if $\operatorname{den}(f(p/q))\to\infty$ as $q\to\infty$, then $f(\alpha)$ is transcendental.

\subsection{Proof of Corollary \ref{corol1}}\label{sub3.3}

Let $k\geq 1$, let $g\in C^k(\mathbb R)$, and let $\varepsilon>0$. Fix a real number $t>2k$. By Theorem~\ref{theo2}, there exist uncountably many functions $f\in C^k(\mathbb R)$ such that
\[
d(f,g)<\varepsilon,\qquad f(\mathbb Q)\subseteq \mathbb Q,
\]
and
\[
\operatorname{den}\!\left(f\!\left(\frac pq\right)\right)\leq Cq^t
\]
for every rational number $p/q$ in lowest terms. Moreover, all these functions satisfy
\[
f(x)=g(x)\qquad \text{for all }x\in C(2).
\]

Set $\Psi(q)=Cq^t$. Then $\Psi(q)\gg q$ and $\Psi(q)\ll q^t$. Hence, by Lemma~\ref{lemma:liouville-preservation}, we obtain
\[
f(\mathbb L)\subseteq \mathbb L\cup\mathbb Q.
\]

Furthermore, by construction in Theorem~\ref{theo2}, we may choose the functions $f$ so that
\[
\operatorname{den}(f(p/q))\to\infty \quad \text{as } q\to\infty.
\]
Therefore, applying Lemma~\ref{lemma:liouville-preservation}(i), we conclude that
\[
f(\mathbb L)\subseteq \mathbb L.
\]

It remains to prove that uncountably many of these functions are transcendental. Suppose that $f_1$ and $f_2$ are two algebraic functions among the above family. Then each $f_i$ is real-analytic on the complement of a discrete set of singular points. Since the union of two discrete subsets of $\mathbb R$ is again discrete, there exists a nonempty open interval $I\subset \mathbb R$ on which both $f_1$ and $f_2$ are real-analytic. Moreover, since $C(2)$ is perfect and therefore uncountable, we may choose $I$ so that $C(2)\cap I\neq\emptyset$, and thus $C(2)\cap I$ has accumulation points in $I$.

On the other hand,
\[
f_1(x)=f_2(x)\qquad \text{for all }x\in C(2).
\]
Hence the analytic function $f_1-f_2$ vanishes on the set $C(2)\cap I$, which has an accumulation point in $I$. By the identity principle for analytic functions, it follows that
\[
f_1-f_2\equiv 0 \qquad \text{on } I.
\]

Since $f_1-f_2$ is an algebraic function and vanishes on a nonempty open interval, it must vanish identically on its domain. Therefore $f_1=f_2$, showing that at most one function in the above uncountable family can be algebraic.

Consequently, after removing this possible exceptional function, there still remain uncountably many functions $f\in C^k(\mathbb R)$ such that
\[
d(f,g)<\varepsilon,\qquad f(\mathbb L)\subseteq \mathbb L,
\]
and $f$ is transcendental. This proves that $\mathcal L_k$ is dense in $C^k(\mathbb R)$. Since, for each pair $(g,\varepsilon)$, the above construction yields uncountably many such functions, it follows that $\mathcal L_k$ is uncountable. This completes the proof.
\qed

A natural question arising from the results of this paper concerns the existence of functions mapping algebraic numbers of bounded degree and height into algebraic numbers whose height remains polynomially bounded (possibly with controlled degree), as well as determining the maximal differentiability under which such constructions can be achieved. It is also natural to ask whether one can impose analogous arithmetic constraints on the derivatives of such functions. The authors believe that such constructions should be possible.
\section*{Acknowledgements}

The first author gratefully acknowledges the financial support of the Fundação de Estudos em Ciências Matemáticas (FEMAT) under the Incentive for Scientific Production (Edital No.~01/2025).

\bibliographystyle{alpha}
\bibliography{bibliography} 
\end{document}